\documentclass[a4paper,10pt]{amsart}
\usepackage{amsmath,amsthm,amssymb,tikz,calc,graphicx}
\usepackage[latin1]{inputenc}
\usepackage[T1]{fontenc}
\usepackage[all]{xy}
\usepackage{mathrsfs,pifont}
\usepackage{booktabs}
\usepackage{array}
\usepackage{longtable}
\usepackage{placeins}

\usepackage[hidelinks]{hyperref}

\newtheorem*{thm*}{Main Theorem}

\newtheorem{cor}[equation]{Corollary}
\newtheorem{lem}[equation]{Lemma}
\newtheorem{prop}[equation]{Proposition}

\theoremstyle{definition}

\newtheorem{defn-lem}[equation]{Definition-Lemma}

\newtheorem*{rem*}{Remark}

\numberwithin{equation}{section}

\usepackage{bm}
\usepackage{upgreek}
\makeatletter
\newcommand{\bfgreek}[1]{\bm{\@nameuse{up#1}}}

\newcommand{\triv}{{\textbf{1}}}
\def\R{\mathbb R}

\def\Z{\mathbb Z}
\def\A{\mathbb A}
\def\Q{\mathbb Q}
\def\C{\mathbb C}

\def\F{\mathbb F}

\def\hra{\hookrightarrow}
\def\ra{\rightarrow}

\def\q{\mathfrak q}

\def\O{\mathcal O}

\def\p{\mathfrak p}

\def\<{\langle}
\def\>{\rangle}

\def\Gal{{\rm Gal}}

\makeatletter
\DeclareRobustCommand\smallop[2][1]{%
	\mathop{\vphantom{\oplus}\mathpalette\smallop@{{#1}{#2}}}\slimits@
}
\newcommand{\smallop@}[2]{\smallop@@#1#2}
\newcommand{\smallop@@}[3]{%
	\vcenter{%
		\sbox\z@{$#1\oplus$}%
		\hbox{\resizebox{\ifx#1\displaystyle#2\fi\dimexpr\ht\z@+\dp\z@}{!}{$\m@th#3$}}%
	}%
}
\makeatother

\makeatletter
\DeclareRobustCommand\bigop[2][1]{%
	\mathop{\vphantom{\bigoplus}\mathpalette\bigop@{{#1}{#2}}}\slimits@
}
\newcommand{\bigop@}[2]{\bigop@@#1#2}
\newcommand{\bigop@@}[3]{%
	\vcenter{%
		\sbox\z@{$#1\bigoplus$}%
		\hbox{\resizebox{\ifx#1\displaystyle#2\fi\dimexpr\ht\z@+\dp\z@}{!}{$\m@th#3$}}%
	}%
}
\makeatother

\begin{document}

\title{All totally real abelian number fields of class number one are Euclidean}
\author{Harald Grobner}
\thanks{The author has been supported by the research projects PAT-4628923 and PAT-2584625 of the Austrian Science Fund (FWF), the EU NextGeneration project (IIP\_UNIPU\_010159) and the Croatian Science Foundation project (HRZZ-IP-2022-10-4615).}
\address{Harald Grobner, Faculty of Mathematics, University of Vienna, Oskar-Morgenstern-Platz 1, A-1090 Vienna, Austria}
\email{harald.grobner@univie.ac.at}

\keywords{Totally real abelian field, real quadratic field, real cubic field, UFD, PID, Euclidean, Quasi-Riemann hypothesis}
\subjclass[2010]{11R20, 11R27,11R29, 11R42}
\date{\today}

\maketitle

%\today
 
\begin{abstract}
It is a famous result of Narkiewicz that there are at most two real quadratic and at most one totally real cubic Galois extensions $K/\Q$, whose ring of integers $\O_K$ is a unique factorization domain (UFD), but not Euclidean. As predicted by another famous result due to Weinberger, these exceptions should in fact not exist, as their existence would contradict a generalized version of the Riemann hypothesis. Building on recent spectacular progress on the Quasi-Riemann hypothesis for Hecke characters, in this short note we close the above final gap provided by Narkiewicz's result, and show, more generally, that the ring of integers $\O_K$ of any totally real abelian number field $K$ is a UFD, if and only if it is Euclidean. 
\end{abstract}

\vskip 10pt

\setcounter{tocdepth}{1}
%\tableofcontents

\section*{Introduction}

\noindent About 20 years ago, Narkiewicz proved in a short paper, cf.\ \cite{nark}, that there are at most two quadratic and at most one cubic totally real Galois extensions $K/\Q$, whose ring of integers $\O_K$ is a unique factorization domain (UFD), but not Euclidean. As $\O_K$ is Dedekind, this amounts to the dictum that there are at most two real quadratic and at most one totally real cubic Galois extensions $K/\Q$, such that $\mathcal O_K$ is a principal ideal domain (PID), i.e., $K$ has class number $h_K=1$, but there is no function $\omega:\O_K\setminus\{0\}\ra\Z_{\geq 0}$ such that for every $a,b\in\O_K$, $b\neq 0$, there exist $q,r\in\O_K$ such that $a=bq+r$ and either $r=0$ or $\omega(r)<\omega(b)$.\\\\
When Narkiewicz wrote his famous note, it had been widely known, that such exceptional extensions $K/\Q$ should not exist, as their existence would have contradicted a generalized version of the Riemann hypothesis (stated for infinite families of certain Hecke $L$-functions of cyclotomic extensions), which is an important result of Weinberger, \cite{wein}.\\\\
In this short note we verify this prediction of Weinberger's result and prove the following unconditional result for all totally real abelian number fields of degree at least two:

\begin{thm*}
Let $K/\Q$ be a totally real abelian number field with $n=[K:\Q]\geq2$, ring of integers $\O_K$ and class number $h_K$. For $x>1$, let $\Pi_K(x)$ be the number of non-zero prime ideals $\p$ of $K$, whose norm $N(\p)$ is a rational prime strictly between $x$ and $2x$, i.e., $x<N(\p)<2x$, and for which the natural map $\O_K^*\ra(\O_K/\p)^*$ is surjective.
Then there are constants $c_K,x_K>0$ such that
$$\Pi_K(x) \geq c_K \cdot \frac{x}{\log(x)^2}$$
for all $x>x_K$. In particular, for all totally real abelian number fields $K$ 
$$\O_K\text{ is Euclidean}
 \quad\Longleftrightarrow\quad h_K=1.$$
\end{thm*}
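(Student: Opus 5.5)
The plan follows the Gupta--Murty--Murty / Harper--Murty strategy. The lower bound for $\Pi_K(x)$ comes from a sieve argument of Heath-Brown type, combined with a Bombieri--Vinogradov type theorem for the Kummer extensions $K(\zeta_q,\varepsilon^{1/q})$. The input replacing GRH (as used by Weinberger) is the recent unconditional quasi-Riemann hypothesis, i.e.\ zero-free strips for Hecke $L$-functions in the relevant families.

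Since $K$ is totally real of degree $n\geq 2$, the unit rank is $r=n-1\geq 1$. Fix a free subgroup $\Gamma=\langle\varepsilon_1,\dots,\varepsilon_r\rangle$ of multiplicatively independent units. For a degree-one prime $\p$ of norm $p$, the map $\O_K^*\to(\O_K/\p)^*\cong\F_p^*$ is surjective as soon as the image of $\Gamma$ has index one. This index is nontrivial exactly when some prime $q\mid p-1$ has $\p$ splitting completely in $L_q=K(\zeta_q,\Gamma^{1/q})$.

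The count splits into three ranges of $q$.
\begin{itemize}
\item Small primes $q$ are handled by a sieve. Restrict to primes $p$ for which $(p-1)/2$ has all odd prime factors larger than $x^{1/4}$, or is almost prime. For each small $q$, the Chebotarev density theorem for $L_q$ gives the local density, and Kummer theory shows $[L_q:K]=q^{r}(q-1)$ for all but finitely many $q$. For finitely many exceptional $q$, one should be able to choose the generators (or pass to a finite index subgroup) so that the obstruction is only at $q=2$; the condition $p\equiv 3 \bmod 4$ then removes it. Totally real abelian fields need care here because of $-1$ and square roots of units.
\item Medium primes $q$ in $(x^{1/4}, x^{1/2+\delta}]$ are handled by the distribution of primes splitting completely in $K(\zeta_q)$. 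Here one needs a Bombieri--Vinogradov theorem over $K$, which holds unconditionally via Hinz's large sieve for Hecke characters. The additional Kummer condition contributes a saving of $1/q^{r}$.
\item Large primes $q>x^{1/2+\delta}$ force $\p$ to divide $\varepsilon^{m}-1$ with $m=(p-1)/q<x^{1/2}$. The number of such $\p$ is bounded by the number of prime ideals dividing $\prod_{m\le y}N(\varepsilon_i^{m}-1)$. When $r\geq 2$ one uses several independent units, giving the count $O(y^{1+1/r}\log\ldots)$. For $r=1$ (real quadratic fields) this step fails, and the standard device is Heath-Brown's trick of using three auxiliary elements, or restricting $(p-1)/2$ to have a large prime factor in a narrow range.
\end{itemize}

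The main obstacle is the real quadratic case, where the rank is one. There a Bombieri--Vinogradov theorem is needed for the full Kummer fields $K(\zeta_q,\varepsilon^{1/q})$ with level beyond $x^{1/2}$, or at least up to $x^{1/2}$ together with control of the exceptional zeros. This is precisely where the quasi-Riemann hypothesis for Hecke $L$-functions enters. I would first try to apply it as a zero-free region $\Re s>1-\eta$ uniform in $q$. That yields an error term $x^{1-\eta}$ per modulus, summable over $q\le x^{\theta}$ for small $\theta$. The range $x^{\theta}<q\le x^{1/2}$ is then covered by a Heath-Brown style bound counting $\p$ whose index is divisible by a prime in that range.

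Having $\Pi_K(x)\gg x/\log^2 x$ with $N(\p)$ prime, Harper's criterion (Harper--Murty) implies that $\O_K$ is Euclidean when $h_K=1$ and the unit rank is at least one. The criterion needs a set of primes with surjective unit map whose counting function is $\gg x/\log^2x$. The converse direction is trivial, since Euclidean implies PID.
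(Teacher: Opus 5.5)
You have a genuine gap, and it sits exactly where the new content of the theorem lies: unit rank $1$ (and also rank $2$), for primes $q\mid p-1$ between a tiny power of $x$ and about $x^{1/2}$.

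\textbf{The medium-range step.} You claim a saving of $1/q^{r}$ from the Kummer condition for $q$ up to $x^{1/2+\delta}$. Hinz's large sieve only controls the abelian extensions $K(\zeta_q)/K$. No Bombieri--Vinogradov theorem at level $x^{1/2}$ is known for the non-abelian family $K(\zeta_q,\varepsilon^{1/q})$. Without the Kummer saving, you cannot show that the primes of your sieved set with a prime factor $q\in(x^{1/4},x^{1/2})$ of $p-1$ are few; one expects them to be a positive proportion.

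\textbf{The rank-one fallback.} This does not close the gap either.
A zero-free strip $\Re e(s)>1-\eta$ gives an error of roughly $x^{1-\eta}$ per modulus. It therefore only handles $q\le x^{\theta}$ with $\theta<\eta$, and here $\eta=10^{-6}$.
The Heath-Brown-type count is useless for $x^{\theta}<q\le x^{1/2}$. From $q\mid i(\p)$ you only get $\operatorname{ord}_\p(\varepsilon)\le 2x^{1-\theta}$. Counting prime ideals of norm $>x$ dividing $\prod_{j\le y}(\varepsilon^j-1)$ then gives $\ll y^2/\log x\approx x^{2-2\theta}/\log x$. This is $o(x/\log(x)^2)$ only when $\theta>1/2$.
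For rank $r$ the bound is $y^{1+1/r}$, so for $r=2$ the obstruction persists for $q\le x^{1/3}$.
Your restriction to $p$ with all odd prime factors of $(p-1)/2$ above $x^{1/4}$ actually pushes every odd prime factor into this bad range.

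\textbf{The missing idea.} Choose the primes so that this range never occurs. The paper takes this from Scholium (II): there are $\gg x/\log(x)^2$ primes $p\in(x,2x)$ with $p\equiv-1\pmod{8m}$ and $p-1=2rq$. Here $q>x^{9/10}$ is prime and every prime $\ell\mid r$ lies in $[\exp(\log(x)^{1/10}),\exp(\log(x)^{3/10})]$.
The congruence makes $p$ split completely in $K$, since its Frobenius is complex conjugation. It also gives $v_2(p-1)=1$, so the unit $-1$ disposes of the prime $2$.
If $q\mid i(\p)$, then $\operatorname{ord}_\p(\varepsilon)<2x^{1/10}$. A single unit therefore suffices, and only $O(x^{1/5}/\log x)$ such $\p$ occur.
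Each $\ell\mid r$ is $x^{o(1)}$. The paper combines the quasi-Riemann hypothesis, descended from cyclotomic fields containing $\mu_{12}$ to $\zeta_{H_\ell}$ with $H_\ell=K(\zeta_\ell,\varepsilon^{1/\ell})$, with an explicit formula that is uniform in the field. This gives $S_\ell(x)\ll x/(\ell(\ell-1)\log x)+x^{1-\delta_0}$.
Summed over these $\ell$, the total is $o(x/\log(x)^2)$. The lower cutoff on $\ell$ controls the main terms, and the subpolynomial upper cutoff controls the errors.

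Without an input of this kind, or a genuinely new level-of-distribution result for the Kummer family, your argument does not reach ranks $1$ and $2$. Those are precisely the quadratic and cubic cases left open by Narkiewicz.
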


The reader will immediately observe that the conclusion about $\O_K$ being Euclidean if and only if $K$ has class number $h_K=1$ indeed follows from (the fact that $\O_\Q=\Z$ is Euclidean and) the first part of our theorem by a well-known and fundamental result of Harper-Murty, cf.\ \cite{HM}, Prop.\ 2 -- a way of reasoning, which goes exactly along the same lines as the arguments in \cite{nark} as well as along the ones in the just quoted \cite{HM}, but also in the more recent \cite{MP}. In fact, our theorem overlaps and partly reproves some important results in all of the aforementioned sources in the case of totally real abelian extensions $K/\Q$ -- we particularly mention \cite{HM}, Thm.\ 2 ($K/\Q$ abelian with $rk_\Z(\O_K)\geq 3$), and \cite{MP}, Cor.\ 1.5, ($K/\Q$ totally real, Galois over some proper subfield and $[K:\Q]\geq5$) -- whereas it establishes the non-existence of the hypothetical exceptional quadratic, resp.\ cubic extensions of the aforementioned main result of \cite{nark}, as every cubic Galois extension of $\Q$ is clearly necessarily totally real and abelian.\\\\
Having said this, our novelty is hence contained in showing the lower bound $c_K \frac{x}{\log(x)^2}$ for $\Pi_K(x)$, $x\gg 0$, when $K/\Q$ is totally real abelian of degree $[K:\Q]=2,3$.\\\\ 
To this end, our main ingredients are two results from the recent source \cite{OAI}, Thm.\ 1.2 and Prop.\ 2.1, {\it ibidem}. Once verified and part of the {\it a priori} knowledge of mankind, they establish the following two important assertions, which shall be treated as a blackbox here:\\\\
\noindent
\textbf{Scholium (I)}: {\it There is a $\delta_0\in(0,1/2)$ such that, for every cyclotomic field $F$ containing the group of 12th roots of unity $\mu_{12}$ and every Hecke character $\chi:\A^*_F/F^*\ra\C^*$ of $F$ of finite order, the associated Hecke $L$-function $L(s,\chi)$ has no zeros in the open region
$$\Re e(s)>1-\delta_0.$$}
\noindent Here, the width $\delta_0$ is in fact independent of $F$ and $\chi$. Thm.\ 1.2 of \cite{OAI} states that Scholium (I) is true with uniform width $\delta_0=10^{-6}$, hence establishing a uniform version of a Quasi-Riemann hypothesis for all finite-order Hecke characters of all cyclotomic fields containing $\mu_{12}$. The next assertion is far more technical in nature and borrowed from the same source:\\\\
\noindent
\textbf{Scholium (II)}: {\it For every integer $k\geq 1$, there are $a_k, x_k>0$ such that for every $x\geq x_k$, at least $a_k x/\log(x)^2$ distinct rational primes $p$ with $x<p<2x$ satisfy
\begin{equation}\label{eq:CP}
 p\equiv-1\pmod{8k},\quad {\rm and} \quad p-1=2rq,
\end{equation}
where $q>x^{9/10}$ is prime and $r$ is a positive integer, whose prime divisors $\ell$ are all in between $\exp(\log(x)^{1/10})$ and $\exp(\log(x)^{3/10})$.}\\\\
\noindent This statement is the case $M=8k$, $c=2$ and $u=-1$ in Prop.\ 2.1 of \cite{OAI}.\\\\ 
In Prop.\ \ref{lem:smooth} below, we establish a refined, as well as generalized version of another result in \cite{OAI} (its Lem.\ 9.2). Our more general approach here is in fact guided by our recent work on the prime-counting function $\pi(x)$, cf.\ \cite{grobner}. To state Prop.\ \ref{lem:smooth}, let $x>1$ be any real number, let $F$ be any number field whose absolute discriminant is denoted $D_F$ and let $\varphi\in C_c^\infty(\R_{>0}, \R_{\geq 0})$ be any smooth function of compact support. Then,
$$\Theta_{F,\varphi}(x):=\sum_{\mathfrak q}\sum_{j\geq1}\log(N(\mathfrak q))\,\varphi\!\left(\frac{N(\mathfrak q)^j}{x}\right) \ll_\varphi x\widehat\varphi(1)+x^{\Theta_F}(\log D_F+[F:\Q]),$$
where
$$\Theta_F:=\sup\{\Re e(\rho) \ | \ \zeta_F(\rho)=0, \ 0<\Re e(\rho)<1\}$$
is Riemann's constant for the Dedekind zeta-function $\zeta_F(s)$ of $F$ and $\widehat\varphi$ denotes the Mellin transform of $\varphi$. Here, in the above series defining $\Theta_{F,\varphi}(x)$, the first sum runs over the non-zero prime ideals $\q$ of the ring of integers $\O_F$ of $F$ and $N(\q)$ denotes their norm.\\\\ 
We remark that our Prop.\ \ref{lem:smooth} applies unconditionally to any number field (unlike \cite{OAI}, Lem.\ 9.2, which applies only to those number fields, for which a version of the Quasi-Riemann hypothesis is known) and contains \cite{OAI}, Lem.\ 9.2 as a proper corollary. As this is an important technical novelty, the reader will find a full proof below (unlike in \cite{OAI}, where a proof is only sketched, disguising the argument for why the implied bound is independent of the number fields under consideration, which is crucial in applications, however). \\\\ 
When combined with Scholia (I) and (II), it yields the desired lower bound of growth for the ``prime ideal counting function'' $\Pi_K(x)$. This final argument is carried out in two steps, to be found in our Sect.\ \ref{sec:goodprimes} and Sect.\ \ref{sect:mthmproof}, respectively, to which we refer for all details.

\small  
%\noindent{\it Acknowledgments:} %The final version of this article was compiled with the support of AI-tools: Aside from proofreading the latex-code, we used them as some sort of a ``sparring partner'' in some useful discussions -- in much the same way as we would have discussed with a human colleague, whose name would now be acknowledged. Needless to say that we assume full responsibility for the contents of the paper.
 
%The following is a quote from ChatGPT, which suggested the following acknowledgments, in order to fairly explain its own role in the development of this paper: "The author used ChatGPT (OpenAI) as a mathematical sparring partner during the development of this work, in much the same way as one might discuss developing arguments with a colleague. These discussions were used to test proof strategies, examine intermediate arguments, and improve the exposition. The author independently verified all mathematical claims and assumes full responsibility for the contents of the paper." I think this is exactly recovered by my own version, whence I find it appropriate to leave it as it is.
\normalsize

\section{Several key-observations}
\subsection{Notational preliminaries}
We use the standard Vinogradov- and Landau-notation. Thus, if $f$ is a complex-valued and $g$ is non-negative real-valued function, then $f(T)\ll g(T)$ means that there exists a constant $C>0$ such that $|f(T)|\le C\cdot g(T)$ for all sufficiently large $T$. Equivalently, $f(T)=O(g(T))$. If we want to indicate that the implied constant is allowed to depend on one or more fixed parameters, we indicate this by subscripts. For example, $f(T)\ll_{a,b,c} g(T)$ means that $|f(T)|\le C_{a,b,c}\cdot g(T)$ for all sufficiently large $T$, where $C_{a,b,c}>0$ may depend on ${a,b,c}$, but not on $T$. We write synonymously $f(T)=O_{a,b,c}(g(T))$. Analogously, if $g(T)>0$ for $T\gg 0$, then $f=o(g)$ means that $|f(T)g(T)^{-1}|\to 0$ as $T\to\infty$.\\\\
For $n\geq 2$, we write $\mu_n$ for the finite group of $n$-th roots of unity in $\C$. The norm attached to a finite extension of subfields $L/F$ of $\C$ shall be denoted by $N_{L/F}$, the norm of an ideal $\q$ of a number field $L$ simply by $N(\q)$. If $L$ denotes a number field, we shall write $\O_L$ for its ring of integers and $D_L$ for its absolute discriminant.\\\\ 
Our case of interest shall be the one of a totally real abelian extension $K/\Q$ with $n=[K:\Q]\geq2$ (where we recall that being abelian includes the assumption of being Galois). We write $h_K$ for its class number and denote its absolute discriminant simply by $D=D_K>0$. By the Kronecker--Weber theorem, cf.\ \cite{Neukirch}, V.1.10, we may fix, once and for all, an integer $m\geq1$ such that $K\subseteq\Q(\zeta_m)$. Since $K$ is totally real, we have
$$\O_K^*=\{\pm\varepsilon_1^{j_1}\cdots\varepsilon_{n-1}^{j_{n-1}} \ | \  j_1,\ldots,j_{n-1}\in\Z\},$$
where the classes of $\varepsilon_1,\ldots,\varepsilon_{n-1}$ form a basis of $\O_K^*/\{\pm1\}$. Fix such a system and put $\varepsilon=\varepsilon_1$. After replacing $\varepsilon_1$ by its inverse and/or its negative, we may assume $\varepsilon>1$ in the fixed real embedding $K\subset\R$.

\subsection{Some field extensions}
For an odd prime $\ell$, set $B_\ell=K(\zeta_\ell)$ and $H_\ell=K(\zeta_\ell,\varepsilon^{1/\ell})$, the Galois-extension $H_\ell/K$ given by the splitting field of $X^\ell-\varepsilon$ over $K$. We record

\begin{lem}\label{lem:degree} 
If $\ell\nmid D$, then $[B_\ell:K]=\ell-1$ and $[H_\ell:K]=\ell(\ell-1)$.
\end{lem}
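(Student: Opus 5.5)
The plan is to treat the two degrees separately: first the cyclotomic part $B_\ell/K$, then the Kummer layer $H_\ell/B_\ell$. Multiplicativity of degrees in the tower $K\subseteq B_\ell\subseteq H_\ell$ then gives $[H_\ell:K]=\ell(\ell-1)$.

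For $[B_\ell:K]=\ell-1$, we use ramification. Since $\ell\nmid D$, the prime $\ell$ is unramified in $K$. In $\Q(\zeta_\ell)$, on the other hand, $\ell$ is totally ramified with ramification index $\ell-1$. Consider $B_\ell=K\Q(\zeta_\ell)$ and a prime $\mathfrak L$ of $B_\ell$ above $\ell$. Its ramification index over $\Q$ is divisible by $\ell-1$, because $B_\ell\supseteq\Q(\zeta_\ell)$. Its ramification index over $\Q$ also equals $e(\mathfrak L|\mathfrak l)\cdot e(\mathfrak l|\ell)=e(\mathfrak L|\mathfrak l)$, where $\mathfrak l=\mathfrak L\cap\O_K$.

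Hence $\ell-1$ divides $e(\mathfrak L|\mathfrak l)$, which is at most $[B_\ell:K]$. Since also $[B_\ell:K]\le[\Q(\zeta_\ell):\Q]=\ell-1$, equality follows. Equivalently, $K\cap\Q(\zeta_\ell)=\Q$, since any subfield of $\Q(\zeta_\ell)$ other than $\Q$ is ramified at $\ell$.

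For $[H_\ell:B_\ell]=\ell$, note that $B_\ell$ contains $\mu_\ell$. By Kummer theory it therefore suffices to show $\varepsilon\notin B_\ell^{*\ell}$. Suppose instead $\varepsilon=\beta^\ell$ with $\beta\in B_\ell$. The extension $K(\beta)/K$ is then a subextension of $B_\ell/K$, and $B_\ell/K$ is abelian of degree $\ell-1$, prime to $\ell$. So $[K(\beta):K]$ divides $\ell-1$.

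On the other hand, $\beta$ is a root of $X^\ell-\varepsilon$ over $K$. The real number $\varepsilon^{1/\ell}$ is not in $K$, because $\varepsilon$ is a basis element of $\O_K^*/\{\pm1\}$. If $\varepsilon=\eta^\ell$ with $\eta\in K$, then $\eta$ would be a unit, contradicting the basis property. A unit $\pm\eta$ with $\eta^\ell=\pm\varepsilon$ is excluded in the same way, using that $\ell$ is odd.

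By the standard irreducibility criterion for binomials (Capelli/Abel: $X^\ell-a$ with $\ell$ prime is irreducible over $K$ iff $a\notin K^{\ell}$), the polynomial $X^\ell-\varepsilon$ is irreducible over $K$. Thus $[K(\beta):K]=\ell$, which does not divide $\ell-1$. This contradiction gives $[H_\ell:B_\ell]=\ell$.

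The main point to check carefully is the second step: that $\varepsilon$ is not an $\ell$-th power in $K$, and that this transfers to $B_\ell$. The transfer is handled by the coprimality argument of degrees $\ell$ versus $\ell-1$. It does not need $\ell\nmid D$, which is used only for the cyclotomic degree.
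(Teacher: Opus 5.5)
Your proof is correct and follows the paper's route: ramification at $\ell$ forces $[B_\ell:K]=\ell-1$, the basis property gives $\varepsilon\notin K^{*\ell}$, and Kummer theory over $B_\ell\supseteq\mu_\ell$ finishes once $\varepsilon\notin B_\ell^{*\ell}$. For the first step you compute ramification indices in the compositum where the paper uses $K\cap\Q(\zeta_\ell)=\Q$. For the final transfer you use irreducibility of $X^\ell-\varepsilon$ over $K$ together with $\ell\nmid\ell-1$. The paper instead avoids the binomial irreducibility criterion: it takes $b=N_{B_\ell/K}(\alpha)$, so that $b^\ell=\varepsilon^{\ell-1}$ and $(\varepsilon/b)^\ell=\varepsilon$ gives the contradiction directly in $K$.
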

\begin{proof} 
To see the first assertion, consider the intersection $K\cap\Q(\zeta_\ell)$. Every nontrivial subfield of $\Q(\zeta_\ell)$ is ramified at $\ell$: The prime $\ell$ is totally ramified in the cyclotomic extension, and total ramification descends to its subfields. Since $K$ is unramified at $\ell$, as $\ell\nmid D$ by assumption, the intersection is $\Q$. This proves the first degree formula $[B_\ell:K]=\ell-1$.\\\\
For the second formula, we first recall that the unit $\varepsilon$ is not an $\ell$-th power in $K$. Indeed, an element of $K$, whose $\ell$-th power is $\varepsilon$, is an algebraic integer, and its inverse is an algebraic integer as well, whence it is a unit of $\O_K$. Comparing the $\varepsilon_1$-coordinate in $\O_K^*/\{\pm1\}$ would then give the impossible equality $\ell j_1=1$ for some $j_1\in\Z$. Strengthening this assertion, we observe that it is not an $\ell$-th power in $B_\ell$ either: In fact, if $\alpha^\ell=\varepsilon$ with $\alpha\in B_\ell$, put $b=N_{B_\ell/K}(\alpha)$. Taking norms gives $b^\ell=\varepsilon^{\ell-1}$, and hence $\left(\frac{\varepsilon}{b}\right)^\ell=\varepsilon$, contradicting our preceding observation. Since $B_\ell$ contains $\mu_\ell$, the splitting field of $X^\ell-\varepsilon$ over $B_\ell$ has Galois group a subgroup of $\mu_\ell$, so its degree is either $1$ or $\ell$. The former would put an $\ell$-th root in $B_\ell$, so the degree is $\ell$ as desired. 
\end{proof}

\begin{lem}\label{lem:disc}
If $\ell\nmid D$,
$$ D_{B_\ell}\leq D^{\ell-1}\ell^{n(\ell-2)}, \quad D_{H_\ell}\leq D^{\ell(\ell-1)}\ell^{n\ell(2\ell-3)}, \quad \frac{\log D_{H_\ell}}{[H_\ell:\Q]}
 \leq\frac1n\log D+2\log\ell.$$
\end{lem}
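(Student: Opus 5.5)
We will use the tower formula for discriminants, $D_L = D_F^{[L:F]}\, N_{F/\Q}(\mathfrak d_{L/F})$ for a tower $\Q\subseteq F\subseteq L$, together with bounds on the relative discriminants coming from explicit generators. By Lemma \ref{lem:degree}, $[B_\ell:\Q]=n(\ell-1)$ and $[H_\ell:\Q]=n\ell(\ell-1)$, so we know all the degrees.

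\emph{The field $B_\ell$.} We have $B_\ell=K(\zeta_\ell)$, and $\O_K[\zeta_\ell]\subseteq\O_{B_\ell}$. Hence $\mathfrak d_{B_\ell/K}$ divides the discriminant of the basis $1,\zeta_\ell,\ldots,\zeta_\ell^{\ell-2}$. Since $K\cap\Q(\zeta_\ell)=\Q$ (proof of Lemma \ref{lem:degree}), the minimal polynomial of $\zeta_\ell$ over $K$ is the $\ell$-th cyclotomic polynomial. Its discriminant is $\pm\ell^{\ell-2}$. Therefore $N_{K/\Q}(\mathfrak d_{B_\ell/K})\le \ell^{n(\ell-2)}$, and the tower formula gives
$$D_{B_\ell}\le D^{\ell-1}\ell^{n(\ell-2)}.$$
(Alternatively, $K$ and $\Q(\zeta_\ell)$ have coprime discriminants, so $D_{B_\ell}=D^{\ell-1}\ell^{n(\ell-2)}$ exactly.)

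\emph{The field $H_\ell$.} We have $H_\ell=B_\ell(\alpha)$ with $\alpha^\ell=\varepsilon$, and by Lemma \ref{lem:degree} the relative degree $[H_\ell:B_\ell]$ equals $\ell$. The order $\O_{B_\ell}[\alpha]$ sits inside $\O_{H_\ell}$. Its relative discriminant is the discriminant of $X^\ell-\varepsilon$, namely $\pm\ell^\ell\varepsilon^{\ell-1}$, which generates the ideal $\ell^\ell\O_{B_\ell}$ because $\varepsilon$ is a unit. So $N_{B_\ell/\Q}(\mathfrak d_{H_\ell/B_\ell})\le \ell^{\ell\cdot n(\ell-1)}$. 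The tower formula then gives
$$D_{H_\ell}\le D_{B_\ell}^{\ell}\,\ell^{n\ell(\ell-1)}\le D^{\ell(\ell-1)}\ell^{n\ell(\ell-2)+n\ell(\ell-1)}=D^{\ell(\ell-1)}\ell^{n\ell(2\ell-3)}.$$

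\emph{The normalized bound.} Divide the logarithm of the last inequality by $[H_\ell:\Q]=n\ell(\ell-1)$. This yields
$$\frac{\log D_{H_\ell}}{[H_\ell:\Q]}\le \frac{1}{n}\log D+\frac{2\ell-3}{\ell-1}\log\ell\le\frac1n\log D+2\log\ell.$$

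No step is a serious obstacle. The only points needing care are:
\begin{itemize}
\item justifying that the discriminant of a subring $\O_F[\theta]$ bounds the relative discriminant; this holds because the relative discriminant ideal divides the discriminant of any integral basis of $L/F$;
\item using the tower formula correctly, with the exponent $[L:F]$ on $D_F$;
\item computing the discriminants of $\Phi_\ell$ and $X^\ell-\varepsilon$.
\end{itemize}
For the last point, $\mathrm{disc}(X^\ell-a)=\pm\ell^\ell a^{\ell-1}$ follows from $\prod f'(\alpha_i)=\ell^\ell\prod\alpha_i^{\ell-1}$.
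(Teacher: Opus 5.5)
Your proposal is correct and follows essentially the same route as the paper: bound the relative discriminants by the polynomial discriminants of $\Phi_\ell$ and $X^\ell-\varepsilon$ (the latter being $\ell^\ell$ times a unit), then apply the tower formula and divide by $[H_\ell:\Q]=n\ell(\ell-1)$. The only cosmetic difference is in the $B_\ell$ step. The paper bounds $D_{B_\ell}$ via the discriminant $D^{\ell-1}D_{\Q(\zeta_\ell)}^n$ of the compositum order $\O_K\Z[\zeta_\ell]$, and this is the same order $\O_K[\zeta_\ell]$ that you handle through the relative discriminant and the tower formula.
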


\begin{proof}
As we have just seen above, the fields $K$ and $\Q(\zeta_\ell)$ are linearly disjoint. The order generated by their rings of integers in the compositum has discriminant $D^{\ell-1}D_{\Q(\zeta_\ell)}^n$ and the field discriminant divides this order discriminant with square quotient. Since $D_{\Q(\zeta_\ell)}=\ell^{\ell-2}$, the first inequality follows. For the second, observe that the root $\varepsilon^{1/\ell}$ is integral over $\mathcal O_{B_\ell}$ and that the polynomial discriminant of $X^\ell-\varepsilon$ is, up to sign, $\ell^\ell\varepsilon^{\ell-1}$. The second factor is a unit, so the relative discriminant ideal of $H_\ell/B_\ell$ divides $\ell^\ell\mathcal O_{B_\ell}$. The discriminant formula in a tower therefore gives
$$ D_{H_\ell}\leq D_{B_\ell}^{\ell} \ell^{\ell[B_\ell:\Q]} \leq D^{\ell(\ell-1)} \ell^{n\ell(\ell-2)+n\ell(\ell-1)},$$
which is nothing but the second inequality. Divide its logarithm by $[H_\ell:\Q]=n\ell(\ell-1)$ and use $(2\ell-3)/(\ell-1)<2$ to obtain the last claim.
\end{proof}

\begin{lem}\label{lem:splittingtest}
Let $\p$ be a prime ideal of $K$, whose norm is a prime $p=N(\p)$ with $p\nmid\ell D$. Then, $\p$ splits completely in $H_\ell/K$, if and only if
$$p\equiv1\pmod\ell,\qquad \varepsilon^{(p-1)/\ell}\equiv1\pmod\p.$$
\end{lem}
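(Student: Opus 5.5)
The plan is to split the condition in two along the tower $K\subseteq B_\ell\subseteq H_\ell$ and use that complete splitting of a prime unramified in a Galois extension is the same as having residue degree one. First I will check that $\p$ is unramified in $H_\ell/K$. By Lemma \ref{lem:disc}, the relative discriminant of $B_\ell/K$ divides a power of $\ell$ (the order $\O_K[\zeta_\ell]$ has relative discriminant dividing $\ell^{\ell-2}$). Likewise the relative discriminant of $H_\ell/B_\ell$ divides $\ell^\ell\O_{B_\ell}$, since $\varepsilon$ is a unit. As $p\nmid\ell$, the prime $\p$ is unramified in $H_\ell$. So $\p$ splits completely in $H_\ell/K$ exactly when its Frobenius in $\Gal(H_\ell/K)$ is trivial. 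Equivalently, the residue field $\O_K/\p=\F_p$ (here we use $N(\p)=p$) must contain all roots of the reduction of $X^\ell-\varepsilon$ together with $\mu_\ell$. More precisely: $\p$ splits completely iff for some (any) prime $\mathfrak P\mid\p$ of $H_\ell$ we have $\O_{H_\ell}/\mathfrak P=\F_p$.

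\emph{Step 1 (the cyclotomic part).} Since $p\nmid\ell$, the polynomial $X^\ell-1$ is separable mod $\p$, so reduction maps $\mu_\ell$ injectively into the residue field. Hence $\p$ splits completely in $B_\ell=K(\zeta_\ell)$ iff $\F_p^*$ contains $\ell$ distinct $\ell$-th roots of unity, i.e.\ iff $\ell\mid p-1$. This is the standard Kummer--Dedekind argument: $\O_K[\zeta_\ell]$ has index prime to $p$ in $\O_{B_\ell}$ (its discriminant is a power of $\ell$ times a unit). So the factorization of $\p$ is read off from the factorization of the cyclotomic polynomial $\Phi_\ell$ over $\F_p$, and $\Phi_\ell$ splits into linear factors iff $p\equiv1\pmod\ell$.

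\emph{Step 2 (the Kummer part).} Assume $p\equiv1\pmod\ell$ and fix a prime $\mathfrak Q\mid\p$ of $B_\ell$ with residue field $\F_p$. Since $\varepsilon^{1/\ell}$ is integral and $X^\ell-\varepsilon$ has discriminant $\pm\ell^\ell\varepsilon^{\ell-1}$, prime to $\mathfrak Q$, Dedekind's criterion applies at $\mathfrak Q$. Thus $\mathfrak Q$ splits completely in $H_\ell=B_\ell(\varepsilon^{1/\ell})$ iff $X^\ell-\bar\varepsilon$ has a root in $\F_p$. Then it has all $\ell$ roots, since $\mu_\ell\subset\F_p$. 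By cyclicity of $\F_p^*$ and $\ell\mid p-1$, $\bar\varepsilon$ is an $\ell$-th power in $\F_p^*$ iff $\bar\varepsilon^{(p-1)/\ell}=1$, i.e.\ iff $\varepsilon^{(p-1)/\ell}\equiv1\pmod\p$. This condition does not depend on the choice of $\mathfrak Q$, since $\varepsilon\in K$ and all residue fields coincide with $\O_K/\p$. Therefore either all primes of $B_\ell$ above $\p$ split in $H_\ell$ or none do.

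\emph{Combining.} Complete splitting in $H_\ell/K$ is equivalent to complete splitting in $B_\ell/K$ followed by complete splitting of every prime above in $H_\ell/B_\ell$. This gives the two congruence conditions. Conversely, if $p\not\equiv1\pmod\ell$, then $\p$ already fails to split in $B_\ell$. The only mildly delicate point, and the one I expect to need care, is justifying the use of Dedekind's criterion at primes above $p$. For that I would rely on the discriminant computations from Lemma \ref{lem:disc}, which show the relevant monogenic orders have index prime to $p$.
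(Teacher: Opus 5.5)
Your proof is correct, but it takes a different route from the paper's. The paper never factors polynomials. After noting that $\p$ is unramified in $H_\ell$, it tests the Frobenius $\operatorname{Frob}_{\mathfrak P}$ ($\mathfrak P\mid\p$) on the two generators $\zeta_\ell$ and $\alpha$, where $\alpha^\ell=\varepsilon$:
\begin{itemize}
\item $\operatorname{Frob}_{\mathfrak P}$ fixes $\zeta_\ell$ iff $p\equiv1\pmod\ell$;
\item $\operatorname{Frob}_{\mathfrak P}(\alpha)/\alpha\in\mu_\ell$, and $\operatorname{Frob}_{\mathfrak P}(\alpha)/\alpha\equiv\alpha^{p-1}=\varepsilon^{(p-1)/\ell}\pmod{\mathfrak P}$.
\end{itemize}
Injectivity of reduction on $\mu_\ell$ then settles everything at once for the whole extension $H_\ell/K$.

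You instead climb the tower $K\subseteq B_\ell\subseteq H_\ell$. You read off the splitting from $\Phi_\ell$ modulo $\p$ and from $X^\ell-\bar\varepsilon$ over $\F_p$ via Dedekind--Kummer, and finish with the $\ell$-th power criterion in the cyclic group $\F_p^*$. This avoids any Galois action on $\alpha$. In exchange it needs two inputs the Frobenius argument does without.

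The first is the index computation, which you handle correctly through the discriminants $\pm\ell^{\ell-2}$ and $\pm\ell^\ell\varepsilon^{\ell-1}$.

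The second is that $\Phi_\ell$ and $X^\ell-\varepsilon$ are actually the \emph{minimal} polynomials over $K$, resp.\ $B_\ell$, as Dedekind--Kummer requires. That is Lemma \ref{lem:degree}, which uses $\ell\nmid D$, and the statement does not assume this. It is harmless, since $\ell\nmid D$ holds wherever the lemma is applied (Section \ref{sect:1}). In general you can patch it as follows:
\begin{itemize}
\item Replace $\Phi_\ell$ by the minimal polynomial of $\zeta_\ell$ over $K$. It divides $\Phi_\ell$, so its discriminant still divides $\ell^{\ell-2}$, and its reduction splits over $\F_p$ iff $\ell\mid p-1$.
\item If $X^\ell-\varepsilon$ is reducible over $B_\ell\ni\zeta_\ell$, then $\varepsilon$ is the $\ell$-th power of a unit of $B_\ell$. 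Hence $H_\ell=B_\ell$, and the second congruence holds automatically once $p\equiv1\pmod\ell$.
\end{itemize}
You should state this dependence explicitly rather than leave it tacit.
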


\begin{proof}
This is well-known. For the sake of completeness we sketch the argument: The prime $\p$ is unramified in $H_\ell$. Moreover, the cyclotomic extension and the polynomial discriminant above show that no prime other than one above $\ell$ can ramify relatively. The arithmetic Frobenius acts on $\zeta_\ell$ by its $p$-th power, so it fixes $B_\ell$ precisely when $p\equiv1\pmod\ell$. Under this condition, if $\alpha^\ell=\varepsilon$ and $\mathfrak P$ is a prime of $H_\ell$ above $\p$, then
$$\frac{\operatorname{Frob}_{\mathfrak P}(\alpha)}{\alpha}\in\mu_\ell,\qquad\frac{\operatorname{Frob}_{\mathfrak P}(\alpha)}{\alpha}\equiv\alpha^{p-1}=\varepsilon^{(p-1)/\ell}\pmod{\mathfrak P}.$$
where we recall that the division is legitimate because $\alpha$ is a unit. As reduction on $\mu_\ell$ is injective at $p\ne\ell$, the Frobenius fixes $\alpha$ exactly when the second congruence $\varepsilon^{(p-1)/\ell}\equiv1\pmod\p$ above holds. It then fixes the generators of $H_\ell/K$, which is the desired criterion.
\end{proof}

\subsection{Descent of the common zero-free region and a refined growth condition}\label{sec:descent}
In view of Scholium (I), the following assertion, once worth a Fields Medal, now becomes another simple
\begin{lem}\label{lem:descent}
For every odd prime $\ell$, the Dedekind zeta function $\zeta_{H_\ell}(s)$ has no zero in $\Re e(s)>1-\delta_0$ with $\delta_0=10^{-6}$.
\end{lem}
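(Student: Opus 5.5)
The idea is to reduce $\zeta_{H_\ell}(s)$ to Hecke $L$-functions of finite order characters of a single cyclotomic field containing $\mu_{12}$, and then apply Scholium (I) to each factor.

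\textbf{Step 1: an abelian field above $H_\ell$.} Recall $K\subseteq\Q(\zeta_m)$ and put
$$F=\Q(\zeta_{12m\ell}),\qquad M=F(\varepsilon^{1/\ell}).$$
Then $F$ is cyclotomic, contains $\mu_{12}$, and also contains $K$ and $\zeta_\ell$. Hence $H_\ell=K(\zeta_\ell,\varepsilon^{1/\ell})\subseteq M$. Since $\mu_\ell\subseteq F$, the extension $M/F$ is a Kummer extension. It is therefore abelian of degree $1$ or $\ell$, and $\mathrm{Gal}(M/F)$ embeds into $\mu_\ell$.

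\textbf{Step 2: factor $\zeta_M$.} Class field theory (Artin reciprocity) gives
$$\zeta_M(s)=\prod_{\chi}L(s,\chi),$$
where $\chi$ runs over the characters of $\mathrm{Gal}(M/F)$. Each such $\chi$ is a finite order Hecke character $\A_F^*/F^*\to\C^*$, taken primitive so that no Euler factors are lost. The product includes $\chi=\triv$, which gives $\zeta_F$. By Scholium (I), with $\delta_0=10^{-6}$ from \cite{OAI}, Thm.\ 1.2, none of these $L(s,\chi)$ vanishes in $\Re e(s)>1-\delta_0$. So $\zeta_M$ has no zeros there either. Note that $L(s,\chi)$ is entire for $\chi\neq\triv$, and $\zeta_F$ has only its pole at $s=1$, so no cancellation between factors can occur.

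\textbf{Step 3: descend from $M$ to $H_\ell$.} This is the main point. Since $K/\Q$ is abelian, $M/\Q$ is Galois: it is generated over $\Q$ by $\zeta_{12m\ell}$ together with all $\ell$-th roots of all Galois conjugates of $\varepsilon$. Hence $M/H_\ell$ is Galois, and the Aramata--Brauer theorem applies: the quotient $\zeta_M(s)/\zeta_{H_\ell}(s)$ is entire.

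A zero $\rho$ of $\zeta_{H_\ell}$ with $\Re e(\rho)>1-\delta_0$ satisfies $\rho\neq1$. At such a point the entire function $\zeta_M/\zeta_{H_\ell}$ is finite, so
$$\zeta_M(\rho)=\frac{\zeta_M}{\zeta_{H_\ell}}(\rho)\cdot\zeta_{H_\ell}(\rho)=0,$$
contradicting Step 2.

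Alternatively, one can avoid Aramata--Brauer by factoring $\zeta_{H_\ell}$ via Artin $L$-functions of $\mathrm{Gal}(M/H_\ell)$-invariant characters. Each of these is an induced piece of the abelian decomposition over $F$, but that route is messier. The only step I expect to need care is the Galois-closure/Aramata--Brauer argument, together with verifying that $F$ really does contain $\mu_{12}$, $K$ and $\zeta_\ell$. Everything else is formal.

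Note finally that Lemma \ref{lem:degree} is not needed here: the argument works for every odd prime $\ell$, including $\ell\mid D$. The uniformity of $\delta_0$ across all $F$ in Scholium (I) is exactly what makes the zero-free region independent of $\ell$.
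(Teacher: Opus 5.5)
\textbf{Error in Step 3.} Your Steps 1 and 2 are the paper's: your $M=F(\varepsilon^{1/\ell})$ is its $E_\ell=F_\ell H_\ell$, factored into finite-order Hecke $L$-functions over $F=\Q(\zeta_{12m\ell})$ and handled by Scholium (I). The problem is in Step 3, the step you flagged yourself. Your claim that $M/\Q$ is Galois is false in general.

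If $\tau\in\Gal(\overline{\Q}/\Q)$ restricts to $\sigma\in\Gal(K/\Q)$, then $\tau(M)=F(\sigma(\varepsilon)^{1/\ell})$. By Kummer theory over $F\supseteq\mu_\ell$, this equals $M$ only if $\sigma(\varepsilon)\in\varepsilon^{\Z}F^{*\ell}$. The field you describe, generated by $\zeta_{12m\ell}$ and $\ell$-th roots of \emph{all} conjugates of $\varepsilon$, is the Galois closure of $M$, not $M$ itself.

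For $n=2$ the two coincide, since $\sigma(\varepsilon)=\pm\varepsilon^{-1}$ and $-1=(-1)^\ell$. For $n\geq 3$ they typically do not. Take $K$ cyclic cubic and $\ell\equiv 2\pmod 3$ large enough that $\ell\nmid[F:K]$. A norm argument then gives $K^*\cap F^{*\ell}=K^{*\ell}$. Next, $\sigma$ acts on $\O_K^*/\{\pm1\}(\O_K^*)^\ell\cong\F_\ell^2$ and is killed by $X^2+X+1$, because units have norm $\pm1$. This polynomial has no root in $\F_\ell$, so the nonzero class of $\varepsilon$ is not an eigenvector. Hence $\sigma(\varepsilon)\notin\varepsilon^{\Z}F^{*\ell}$ and $M/\Q$ is not Galois. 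The cubic case is one of the two cases the paper is really about, so this is not a marginal failure.

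\textbf{The repair, which is the paper's argument.} The conclusion you need is still true, for a one-line reason. We have $M=FH_\ell$ with $F/\Q$ Galois, so $M/H_\ell$ is Galois. Restriction embeds $\Gal(M/H_\ell)$ into $\Gal(F/F\cap H_\ell)\subseteq\Gal(F/\Q)$, so $M/H_\ell$ is in fact \emph{abelian}. Equivalently, $M/K$ is the compositum of the Galois extensions $F/K$ and $H_\ell/K$.

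With that in hand, Aramata--Brauer is more than you need, though it is valid once Galoisness is correctly justified. One has $\zeta_M(s)=\zeta_{H_\ell}(s)\prod_{\eta\neq\triv}L(s,\eta)$, with $\eta$ running over the characters of $\Gal(M/H_\ell)$. The nonprincipal Hecke $L$-functions over $H_\ell$ are entire, so your final contradiction goes through verbatim. This is exactly how the paper descends, and it also makes your suggested Artin-$L$-function alternative unnecessary.

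With Step 3 repaired this way, your proof is correct. You are also right that the degree lemma is not needed and that the argument covers $\ell\mid D$.
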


\begin{proof}
Put $F_\ell=\Q(\zeta_{12m\ell})$ and $E_\ell=F_\ell H_\ell$. Since $K\subseteq\Q(\zeta_m)$, the cyclotomic field $F_\ell$ contains $K$, $\mu_\ell$ and $\mu_{12}$. Consequently $E_\ell=F_\ell(\varepsilon^{1/\ell})$, and $E_\ell/F_\ell$ is cyclic of degree $1$ or $\ell$. The factorization of a Dedekind zeta function for an abelian extension into finite-order Hecke $L$-functions gives
$$ \zeta_{E_\ell}(s)= \prod_{\chi\in\widehat{\Gal(E_\ell/F_\ell)}} L(s,\chi).$$
Here and below group characters are identified with their class-field theoretic Hecke characters, see \cite{Neukirch}, VII.10. Each factor is covered by Scholium (I), so $\zeta_{E_\ell}$ is zero-free in the stated half-plane. The extension $E_\ell/H_\ell$ is also abelian. Indeed, restriction gives an isomorphism
$$\Gal(E_\ell/H_\ell)\cong \Gal(F_\ell/(F_\ell\cap H_\ell)),$$
a subgroup of the abelian group $\Gal(F_\ell/\Q)$. We have
$$ \zeta_{E_\ell}(s)=\zeta_{H_\ell}(s) \prod_{\substack{\eta\in\widehat{\Gal(E_\ell/H_\ell)}\\\eta\neq\triv}}L(s,\eta).$$
The nonprincipal Hecke factors in the second product are entire. Hence, a zero of $\zeta_{H_\ell}(s)$ in the indicated region would therefore be a zero of $\zeta_{E_\ell}(s)$, a contradiction.
\end{proof}

The next result is another technical key-input, which may be found in a weaker form as Lem.\ 9.2 in \cite{OAI}. Its suggested proof, however, appears very sketchy and is lacking too many details, for us to be sure that the result's insinuated constant is {\it independent of the given number field}, whence we prefer to give a full proof of our more general result ourselves and to complete the arguments {\it ibidem} by ideas, which we developed independently in the course of the research leading to the author's recent results on the prime counting function, cf.\ \cite{grobner}.\\\\ 
To formulate the result, let $F$ be any number field, let $\varphi\in C_c^\infty(\R_{>0},\R_{\geq 0})$ be any non-negative, real valued smooth function of compact support on $\R_{>0}=(0,\infty)$. Its Mellin transform $\widehat \varphi(s)=\int_0^\infty \varphi(t)t^{s-1} dt$ is then an entire function. Now, for $x>0$ we define as in \cite{OAI} 
\begin{equation}\label{eq:thetadef}
\Theta_{F,\varphi}(x):=\sum_{\mathfrak q}\sum_{j\geq1}\log(N(\mathfrak q))\,\varphi\!\left(\frac{N(\mathfrak q)^j}{x}\right),
\end{equation}
where the first sum runs over all non-zero prime ideals $\q$ of $\O_F$. We remark that this sum is well-defined as it is in fact finite for each fixed $x$, because $\varphi$ has compact support and only finitely many prime-ideal powers have bounded norm. Diverting from {\it loc. cit.}, however, but following the spirit of \cite{grobner}, we set
$$\Theta_F:=\sup\{\Re e(\rho) \ | \ \zeta_F(\rho)=0, \ 0<\Re e(\rho)<1\}.$$
Then, $1/2\leq\Theta_F\leq 1$ and we may prove the following generalization, as well as refinement of Lem.\ 9.2 in \cite{OAI}:

\begin{prop}\label{lem:smooth} 
Let $F$ be any number field and let $\varphi\in C_c^\infty(\R_{>0},\R_{\geq 0})$ be any non-negative, real valued smooth function of compact support. Then, for $x>1$,
\begin{equation}\label{eq:theta}
 \Theta_{F,\varphi}(x) \ll_\varphi x\widehat\varphi(1)+x^{\Theta_F}(\log D_F+[F:\Q]),
\end{equation}
where the choice of subscript shall indicate that the implied constant is independent of $F$ and $x$. 
\end{prop}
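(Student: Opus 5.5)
The plan is the standard explicit-formula argument, run so that every constant depends only on $\varphi$. First, since $-\frac{\zeta_F'}{\zeta_F}(s)=\sum_{\q}\sum_{j\ge1}\log N(\q)\,N(\q)^{-js}$ for $\Re e(s)>1$, Mellin inversion gives, for any $c>1$,
$$\Theta_{F,\varphi}(x)=\frac{1}{2\pi i}\int_{(c)}\Bigl(-\frac{\zeta_F'}{\zeta_F}(s)\Bigr)\widehat\varphi(s)\,x^{s}\,ds .$$
Because $\varphi$ has compact support in $\R_{>0}$, repeated integration by parts shows that $\widehat\varphi$ is entire and satisfies $|\widehat\varphi(\sigma+it)|\ll_{\varphi,A,\sigma_0}(1+|t|)^{-A}$ for every $A$, uniformly for $\sigma$ in compact sets.

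Next, shift the contour to the left, say to $\Re e(s)=-1/2$, using the completed zeta function $\Lambda_F(s)=D_F^{s/2}\Gamma_\R(s)^{r_1}\Gamma_\C(s)^{r_2}\zeta_F(s)$. The residues that appear are:
\begin{itemize}
\item the pole at $s=1$, which contributes $x\widehat\varphi(1)$;
\item each nontrivial zero $\rho$ (counted with multiplicity), which contributes $-x^\rho\widehat\varphi(\rho)$;
\item the trivial zero at $s=0$, of order $r_1+r_2-1$, which contributes $O_\varphi([F:\Q]\log x)$ after accounting for the double pole of $\widehat\varphi(s)x^s/s$-type terms.
\end{itemize}
In fact one should avoid the $\log x$ term entirely: for $x>1$ it is better to stop the shift on the line $\Re e(s)=\sigma_1\in(0,1/2)$, or to keep it but use $x^{0}\le x^{\Theta_F}$. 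Since $\Theta_F\ge1/2$, a bound $O_\varphi([F:\Q](1+\log x))$ is still not absorbed, so I would stop at $\Re e(s)=1/4$ to avoid trivial zeros altogether.

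On the remaining vertical integral, write $-\zeta_F'/\zeta_F$ as the sum of three pieces: the logarithmic derivative of the Gamma factors, $\tfrac12\log D_F$, and the Hadamard sum $\sum_\rho (s-\rho)^{-1}$ (up to the constant $B_F$, which cancels in the real-part formulation). The Gamma factors are $O([F:\Q]\log(2+|t|))$ on that line.

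The core step is bounding the zero sum $\sum_\rho x^{\Re e\rho}|\widehat\varphi(\rho)|\le x^{\Theta_F}\sum_\rho|\widehat\varphi(\rho)|$, together with the analogous zero contribution on the shifted line. The key input is the uniform zero-counting bound: for every real $T$,
$$\#\{\rho: |\Im m\rho-T|\le1\}\ll \log D_F+[F:\Q]\log(|T|+2),$$
with an absolute implied constant. This is proved from the Hadamard product by evaluating $\Re e\,\zeta_F'/\zeta_F$ at $s=2+iT$ (Lagarias--Odlyzko / Stark), and every constant there is absolute. Dyadic summation against the rapid decay $|\widehat\varphi(\rho)|\ll_\varphi(1+|\Im m\rho|)^{-3}$ then gives
$$\sum_\rho|\widehat\varphi(\rho)|\ll_\varphi \log D_F+[F:\Q].$$
The shifted-line integral is bounded the same way by $x^{1/4}(\log D_F+[F:\Q])\le x^{\Theta_F}(\log D_F+[F:\Q])$. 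On that line $|\zeta_F'/\zeta_F(s)|$ is controlled by the partial-fraction formula $\sum_{|\gamma-t|\le1}(s-\rho)^{-1}+O(\log D_F+[F:\Q]\log(|t|+2))$. To avoid zeros near the line, choose its position via the standard averaging/pigeonhole device, or better, avoid shifting past the zeros: use the explicit formula in its symmetric Weil form, where every term is visibly uniform.

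The main obstacle is uniformity in $F$: every constant in the explicit formula must depend only on $\varphi$. In particular, the contributions from the Gamma factors and $\log D_F$ must be controlled by $[F:\Q]$ and $\log D_F$ linearly, which is exactly what the uniform Stark-type zero count supplies. Rather than contour shifting, I would first try the Weil explicit formula with test function $\varphi$. It expresses $\Theta_{F,\varphi}(x)$ as
$$x\widehat\varphi(1)-\sum_\rho x^\rho\widehat\varphi(\rho)+O_\varphi\bigl(\log D_F+[F:\Q]\bigr),$$
with the archimedean terms bounded directly by $[F:\Q]$ times integrals of $\varphi$. Combined with $x^{\Re e\rho}\le x^{\Theta_F}$ and $1\le x^{\Theta_F}$, this yields the stated bound \eqref{eq:theta}.
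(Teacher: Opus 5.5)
The argument you finally settle on is correct and is essentially the paper's proof. You use the explicit formula for $g(y)=\varphi(y/x)$, giving $\Theta_{F,\varphi}(x)=x\widehat\varphi(1)-\sum_\rho x^\rho\widehat\varphi(\rho)+O_\varphi(\log D_F+[F:\Q])$ uniformly in $x>1$, and then bound the zero sum by $x^{\Theta_F}(\log D_F+[F:\Q])$ through the uniform local zero count and dyadic summation. The only difference is that the paper derives this explicit formula by hand rather than citing Weil: it shifts from $\Re e(s)=2$ to $\Re e(s)=-1/2$, uses the functional equation on the left line, and uses $F$-dependent heights $T_j$ only to justify the exact identity.

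Your earlier detours should be discarded, as they are unnecessary and partly wrong:
\begin{itemize}
\item Since $\widehat\varphi$ is entire, there is no double pole at $s=0$; the residue there is just $-(r_1+r_2-1)\widehat\varphi(0)$, with no $\log x$.
\item A term $[F:\Q]\log x$ would be absorbed anyway, because $\log x\le\frac2e x^{1/2}\le\frac2e x^{\Theta_F}$ for $x>1$.
\item Stopping at $\Re e(s)=1/4$, inside the critical strip, only creates the problem of zeros lying near the contour.
\end{itemize}
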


\begin{proof}
Throughout this proof $x>1$ is an arbitrary, but fixed real number. Let $\varphi$ be as in the statement of the proposition. Repeated integration by parts gives for every fixed $B>0$
\begin{equation}\label{eq:individual}
|\widehat \varphi(\sigma+it)|\ll_{\varphi,B}(1+|t|)^{-B}
\end{equation}
uniformly on $-\frac12\leq\sigma\leq2$. As a consequence, Mellin inversion on $\Re e(s)=2$ yields
\begin{equation}\label{eq:Theta}
 \Theta_{F,\varphi}(x)=\frac1{2\pi i} \lim_{T\to\infty}\int^{2+iT}_{2-iT} -\frac{\zeta_F'(s)}{\zeta_F(s)}x^s\widehat \varphi(s)ds.
\end{equation}
Indeed, logarithmic differentiation of the Euler product for the Dedekind zeta function gives the usual Dirichlet series
\begin{equation}\label{eq:individual2}
-\frac{\zeta_F'(s)}{\zeta_F(s)}=\sum_{\mathfrak q}\sum_{j\geq1}\frac{\log(N(\mathfrak q))}{N(\mathfrak q)^{js}}\qquad (\Re e(s)>1),
\end{equation}
which is absolutely convergent in the given domain. Substituting this into the above integral and interchanging summation and integration, we obtain
\begin{align*}
&\frac{1}{2\pi i} \lim_{T\to\infty}\int^{2+iT}_{2-iT} -\frac{\zeta_F'(s)}{\zeta_F(s)} x^s\widehat{\varphi}(s) ds \\
&\qquad= \sum_{\mathfrak q}\sum_{j\geq1}\log(N(\mathfrak q))\frac{1}{2\pi i} \lim_{T\to\infty}\int^{2+iT}_{2-iT} \widehat{\varphi}(s)\left(\frac{x}{N(\mathfrak q)^j}\right)^s ds \\
&\qquad=\sum_{\mathfrak q}\sum_{j\geq1}\log(N(\mathfrak q)) \varphi\!\left(\frac{N(\mathfrak q)^j}{x}\right)
=\Theta_{F,\varphi}(x),
\end{align*}
where the second last equality is just the Mellin inversion formula for $y=N(\mathfrak q)^j/x$ on $\Re e(s)=2$, i.e.,
$$\varphi\!\left(\frac{N(\mathfrak q)^j}{x}\right)=\frac{1}{2\pi i} \lim_{T\to\infty}\int^{2+iT}_{2-iT} \widehat{\varphi}(s)\left(\frac{x}{N(\mathfrak q)^j}\right)^s ds.$$
For completeness, let us also note that the performed interchange is justified by absolute integrability, which itself follows from \eqref{eq:individual}:
$$
\sum_{\mathfrak q}\sum_{j\geq1}\int_{\mathbb R}\left|\log(N(\mathfrak q)) \widehat{\varphi}(2+it)\left(\frac{x}{N(\mathfrak q)^j}\right)^{2+it}\right|dt =
x^2\left(\sum_{\mathfrak q}\sum_{j\geq1}\frac{\log(N(\mathfrak q))}{N(\mathfrak q)^{2j}}\right)\int_{\mathbb R}|\widehat{\varphi}(2+it)| dt<\infty.
$$
We now want to shift the integral to $\Re e(s)=-1/2$, while keeping track of its dependence on $F$.\\\\ To this end, first, let
$$N_F(T):=\sum_{\substack{\zeta_F(\rho)=0\\ 0<\Re e(\rho)<1\\ |\Im m(\rho)|\leq T}}m_\rho$$
with $m_\rho:=\operatorname{ord}_{s=\rho}\zeta_F(s)$, be the (finite) number of the non-trivial zeros $\rho=\beta+i\gamma$ of the Dedekind zeta-function $\zeta_F(s)$ with $|\gamma|\leq T$, counted with multiplicity. We abbreviate $n_F:=[F:\Q]$ and use the uniform estimate provided in \cite{HSW}, Cor.\ 1.2 and (1.3) on p.\ 278, to obtain
\begin{equation}\label{eq:zeroasymptotic}
N_F(T)=M_F(T)+O\bigl(\log D_F+n_F\log(T+2)\bigr)
\qquad(T\geq1),
\end{equation}
with
$$M_F(T):=\frac{T}{\pi}\log\left(D_F\left(\frac{T}{2\pi e}\right)^{n_F}\right).$$
It is important to note that the implied constant is absolute, i.e., independent of $F$. In particular,
\begin{equation}\label{eq:zerocount}
N_F(T)\ll(T+1)\bigl(\log D_F+n_F\log(T+3)\bigr)
\qquad(T\geq0),
\end{equation}
with an absolute implied constant (For $0\leq T<1$, use $N_F(T)\leq N_F(1)$.) Splitting the non-trivial zeros of $\zeta_F(s)$ into $|\gamma|\leq1$ and the ranges $2^k<|\gamma|\leq2^{k+1}$ and ordering them by increasing absolute value of their imaginary parts $\gamma=\Im m(\rho)$ gives
\begin{align}
\sum_\rho m_\rho (1+|\gamma|)^{-3} &\leq N_F(1)+\sum_{k\geq0}2^{-3k}N_F(2^{k+1})\notag\\
&\ll\log D_F+n_F+\sum_{k\geq0}2^{-2k}\log D_F+\sum_{k\geq0}2^{-2k}n_F(k+1)\notag\\
&\ll\log D_F+n_F,
\label{eq:zerosum}
\end{align}
the last line being a consequence of the convergence of the two series.\\\\
Next, the functional equation controls the integral on the new left-hand line. Write
$$\Gamma_F(s)=\Gamma_{\mathbb R}(s)^{r_1}\Gamma_{\mathbb C}(s)^{r_2},\qquad n_F=r_1+2r_2,$$
where $\Gamma_{\mathbb R}(s)=\pi^{-s/2}\Gamma(s/2)$ and $\Gamma_{\mathbb C}(s)=2(2\pi)^{-s}\Gamma(s)$. Logarithmic differentiation of the functional equation for
$D_F^{s/2}\Gamma_F(s)\zeta_F(s)$ gives
$$\frac{\zeta_F'(s)}{\zeta_F(s)}=-\frac{\zeta_F'(1-s)}{\zeta_F(1-s)}-\log D_F-\frac{\Gamma_F'(s)}{\Gamma_F(s)}-\frac{\Gamma_F'(1-s)}{\Gamma_F(1-s)}.$$
On $\Re e(s)=-1/2$, using \eqref{eq:individual2}, we get
$$\left|\frac{\zeta_F'(1-s)}{\zeta_F(1-s)}\right|\leq\sum_p\sum_{\mathfrak q\mid p}\frac{f_{\mathfrak q}\log p}{p^{3f_{\mathfrak q}/2}-1}\leq n_F\sum_p\frac{\log p}{p^{3/2}-1}\ll n_F,$$
where $f_{\mathfrak q}$ denotes the residue degree (whence $\sum_{\mathfrak q\mid p}f_{\mathfrak q}\leq n_F$). We observe that the gamma factors all stay away from their poles on the relevant lines. So, for every $t\in\R$, Stirling's formula, together with boundedness on compact segments, gives
\begin{equation}\label{eq:leftline}
\left|\frac{\zeta_F'(-\frac12+it)}{\zeta_F(-\frac12+it)}\right| \ll\log D_F+n_F\log(2+|t|),
\end{equation}
with an absolute constant. Combining this with %$|x^{-1/2+it}|=x^{-1/2}$ 
\eqref{eq:individual} and the fact that $\widehat\varphi(s)$ is of rapid decay, we hence obtain 
\begin{equation}\label{eq:leftintegral}
\frac{1}{2\pi i}\lim_{T\to\infty}\int^{-1/2+iT}_{-1/2-iT}  -\frac{\zeta_F'(s)}{\zeta_F(s)}x^s\widehat \varphi(s) ds \ll_{\varphi} x^{-1/2}(\log D_F+n_F).
\end{equation}
Now, to justify the announced contour shift from $\Re e(s)=2$ to the line $\Re e(s)=-1/2$, we first fix $F$. For an integer $j\geq 2$ we let $M_j$ denote the number of non-trivial zeros of $\zeta_F(s)$ with $j-1\leq\Im m(\rho)\leq j+2$, counted with (finite) multiplicity $m_\rho$. For integers $j\geq4$, taking differences in
\eqref{eq:zeroasymptotic}, and using
$$M_F'(u)=\frac{1}{\pi}\left(\log D_F+n_F\log\frac{u}{2\pi}\right),$$
gives
\begin{equation}\label{eq:localzerocount}
M_j\leq N_F(j+2)-N_F(j-2)\ll \log D_F+n_F\log(j+4)\ll_F\log j.
\end{equation}
So we may choose $C_F\geq1$, depending only on $F$, such that $M_j\leq C_F\log j$ for all integers $j\geq4$, and we put $\epsilon_j=(4C_F\log j)^{-1}$. Removing intervals of radius $\epsilon_j$ around the ordinates $\gamma=\Im m(\rho)$ of the non-trivial zeros of $\zeta_F(s)$ in $j-1\leq\gamma\leq j+2$ excludes length at most $2\epsilon_j C_F\log j=1/2$ from $[j,j+1]$. Thus, we can choose $T_j\in[j,j+1]$ whose distance from every zero ordinate is at least $\epsilon_j$. As ordinates outside the specified window have distance at least $1$, the conjugation-symmetry of the non-trivial zeros of $\zeta_F(s)$ finally allows us to choose and fix for each $j\geq 4$ a $T_j\in[j,j+1]$ such that
\begin{equation}\label{eq:heightseparation}
|\pm T_j-\Im m(\rho)|\geq\epsilon_j
\end{equation}
for every non-trivial zero $\rho$.\\\\
We recall that for $s=\sigma+it$, with $|t|\geq 2$, and away from the zeros of $\zeta_F(s)$,
$$\frac{\zeta_F'(s)}{\zeta_F(s)}=-\frac1s-\frac1{s-1}+\sum_{\substack{\rho\\|\Im m(\rho)-t|\leq1}} m_\rho\left(\frac1\rho+\frac1{s-\rho}\right)+O_F(\log(|t|+4)),$$
uniformly for $-1/2\leq\sigma\leq2$. See \cite{HKMS}, Lem.\ 5.2. At $t=\pm T_j$, $j\geq 4$, the sum has at most $C_F\log j$ terms by the definition of $C_F$. Each such summand then satisfies $|s-\rho|^{-1}\leq4C_F\log(j)$ and $|\rho|^{-1}\leq(j-1)^{-1}$. Consequently, we have shown that for every $j\ge 4$, 
\begin{equation}\label{eq:horizontalbound}
\sup_{-1/2\leq\sigma\leq2}
\left|\frac{\zeta_F'(\sigma\pm iT_j)}{\zeta_F(\sigma\pm iT_j)}\right|\ll_F(\log T_j)^2.
\end{equation}
Now we compute the integral of $-\frac{\zeta_F'(s)}{\zeta_F(s)}x^s\widehat\varphi(s)$ along the rectangle with vertices $2\pm iT_j$ and $-1/2\pm iT_j$. On either horizontal side, \eqref{eq:horizontalbound} and
\eqref{eq:individual} with $B=2$, imply
$$\left|\int_{-1/2\pm iT_j}^{2\pm iT_j}-\frac{\zeta_F'(s)}{\zeta_F(s)}x^s\widehat\varphi(s) ds \right| \ll_{F,\varphi}x^2\frac{(\log T_j)^2}{T_j^2}\longrightarrow 0$$
as $j\to\infty$ (whence $T_j\to\infty$ as $T_j\in[j,j+1]$).
Indeed, either horizontal side has fixed length $5/2$, $|x^{\sigma\pm iT_j}|=x^\sigma\leq x^2$ for $-1/2\leq\sigma\leq2$, and $|\widehat\varphi(\sigma\pm iT_j)|\ll_\varphi T_j^{-2}$. Finally, we observe that the integrals on the vertical lines converge absolutely as $j\to\infty$ (i.e., $T_j\to\infty$): On $\Re e(s)=2$, this limit, oriented upwards and multiplied by $1/(2\pi i)$, is nothing else but $\Theta_{F,\varphi}(x)$ by \eqref{eq:Theta}, whereas on $\Re e(s)=-1/2$, the respective limit is given by \eqref{eq:leftintegral}.\\\\ 
Knowing the convergence and value of the respective integral along the rectangle with vertices $2\pm iT_j$ and $-1/2\pm iT_j$ as $j\to\infty$, we now compute its value in a second way by use of the residue theorem: The pole of $\zeta_F(s)$ at $s=1$ contributes $x\widehat \varphi(1)$, because $-\zeta_F'(s)/\zeta_F(s)$ has residue $+1$ there. A non-trivial zero $\rho$ of multiplicity $m_\rho$ of $\zeta_F(s)$ contributes $-m_\rho x^\rho\widehat \varphi(\rho)$. At $s=0$, the order of $\zeta_F$ is $m_0=r_1+r_2-1$, giving the contribution $-m_0\widehat \varphi(0)$. Indeed, the completed zeta function has a simple pole at $s=0$, whereas $\Gamma_F(s)$ has a pole of order $r_1+r_2$ there, whence $\textrm{ord}_{s=0}\zeta_F(s)=r_1+r_2-1$. As by \cite{IK04}, Thm.\ 5.33 and \cite{Neukirch}, Cor.\ VII.5.11.(iii), there are no zeros on $\Re e(s)=1$ or on $\Re e(s)=0$ (apart from the possible zero at $s=0$), the residue theorem and \eqref{eq:leftintegral} now give for $x>1$
\begin{equation}\label{eq:explicit}
\Theta_{F,\varphi}(x)= x\widehat \varphi(1)-\sum_\rho m_\rho x^\rho\widehat \varphi(\rho)-(r_1+r_2-1)\widehat \varphi(0)+O_{\varphi}\bigl(x^{-1/2}(\log D_F+n_F)\bigr),
\end{equation}
where the sum is over the non-trivial zeros $\rho$ of $\zeta_F(s)$ ordered by increasing absolute value of their imaginary parts. This latter sum is in fact absolutely convergent (hence their just mentioned ordering is irrelevant), invoking \eqref{eq:individual} with $B=3$ and \eqref{eq:zerosum}. In order to finish the proof of the proposition, it now suffices to bound the summands on the right hand side of \eqref{eq:explicit}: To this end, we apply again \eqref{eq:individual} with $B=3$ and \eqref{eq:zerosum} to get on the one hand,
$$\left|\sum_\rho m_\rho x^\rho\widehat \varphi(\rho)\right|\ll_{\varphi} x^{\Theta_F}\sum_\rho m_\rho (1+|\gamma|)^{-3}\ll_{\varphi} x^{\Theta_F}(\log D_F+n_F),$$
as clearly $|x^\rho|=x^{\Re e(\rho)}\leq x^{\Theta_F}$. On the other hand, the contribution at $0$ is $O_{\varphi}(n_F)$ and hence may be, together with the remainder in \eqref{eq:explicit}, absorbed into $O_{\varphi}(x^{\Theta_F}(\log D_F+n_F))$. This proves \eqref{eq:theta}.
\end{proof}

Let now $S_\ell(x)$ be the number of prime ideals $\p$ of $\O_K$, whose norm $N(\p)$ is a rational prime, satisfying $x< N(\p)<2x$ and that split completely in $H_\ell/K$. Recall $\delta_0=10^{-6}$, so $\Theta_{H_\ell}\leq 1-\delta_0$ for all odd primes $\ell$ by Lem.\ \ref{lem:descent}. Then we obtain the following

\begin{cor}\label{prop:splitting}
For all sufficiently large $x$ and every odd prime $\ell\nmid D$ with $\ell\leq \exp(\log(x)^{3/10})$,
\begin{equation}\label{eq:Sbound}
 S_\ell(x)\ll_K \frac{x}{\ell(\ell-1)\log(x)}+x^{1-\delta_0}.
\end{equation}
Both the implied constant and the threshold for $x$ are independent of $\ell$.
\end{cor}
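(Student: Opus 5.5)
We prove \eqref{eq:Sbound} by applying Prop.\ \ref{lem:smooth} to $F=H_\ell$ with a single fixed test function. We bound $S_\ell(x)$ from above by the contribution of primes of $H_\ell$ to $\Theta_{H_\ell,\varphi}$.

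\emph{Choice of test function.} Fix once and for all $\varphi\in C_c^\infty(\R_{>0},\R_{\geq0})$ with $0\leq\varphi\leq 1$, $\varphi\equiv1$ on $[1,2]$ and support in $[1/2,3]$. This $\varphi$ depends on nothing, so all constants $\ll_\varphi$ are absolute.

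\emph{Lower bound for $\Theta_{H_\ell,\varphi}(x)$ in terms of $S_\ell(x)$.} Let $\p$ be counted by $S_\ell(x)$, so $N(\p)=p$ is prime with $x<p<2x$ and $\p$ splits completely in $H_\ell/K$. Then above $\p$ there are exactly $[H_\ell:K]=\ell(\ell-1)$ primes $\mathfrak P$ of $H_\ell$ (Lem.\ \ref{lem:degree}), each with $N(\mathfrak P)=p$. Each such $\mathfrak P$ contributes $\log p\,\varphi(p/x)=\log p\geq\log x$ to the $j=1$ term. Distinct $\p$ give distinct $\mathfrak P$, and all terms of $\Theta$ are non-negative. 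Hence
\[
\ell(\ell-1)\log(x)\,S_\ell(x)\leq \Theta_{H_\ell,\varphi}(x).
\]

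\emph{Upper bound from Prop.\ \ref{lem:smooth}.} Lem.\ \ref{lem:descent} gives $\Theta_{H_\ell}\leq1-\delta_0$, so Prop.\ \ref{lem:smooth} yields
\[
\Theta_{H_\ell,\varphi}(x)\ll x\widehat\varphi(1)+x^{1-\delta_0}\bigl(\log D_{H_\ell}+[H_\ell:\Q]\bigr).
\]
Lem.\ \ref{lem:disc} gives
\[
\log D_{H_\ell}+[H_\ell:\Q]\leq [H_\ell:\Q]\Bigl(\tfrac1n\log D+2\log\ell+1\Bigr)\ll_K n\ell(\ell-1)\log \ell .
\]
Dividing by $\ell(\ell-1)\log x$, we obtain
\[
S_\ell(x)\ll_K \frac{x}{\ell(\ell-1)\log x}+x^{1-\delta_0}\,\frac{\log \ell}{\log x}.
\]
The hypothesis $\ell\leq\exp(\log(x)^{3/10})$ gives $\log\ell\leq(\log x)^{3/10}\leq\log x$, so the second term is at most $x^{1-\delta_0}$. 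This is exactly \eqref{eq:Sbound}.

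\emph{Uniformity.} The implied constant comes from Prop.\ \ref{lem:smooth}, which is independent of the field and of $x$, together with the bound from Lem.\ \ref{lem:disc}, which depends only on $K$ through $n$ and $D$. Neither depends on $\ell$. The only threshold used is $x>1$, plus whatever makes $\log x\geq1$; again this does not depend on $\ell$.

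\emph{Main obstacle.} The crucial point is the field-uniformity of Prop.\ \ref{lem:smooth}. Since the proposition is already established, the step that needs care is the bookkeeping: the discriminant term grows like $\ell^2\log\ell$, and it must be cancelled by the factor $\ell(\ell-1)$ coming from the complete splitting. After that cancellation, the leftover $\log\ell/\log x$ is absorbed using the range restriction on $\ell$.
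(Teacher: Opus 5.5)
Your proposal is correct and matches the paper's proof: a fixed bump function gives the positivity bound $\ell(\ell-1)\log(x)\,S_\ell(x)\leq\Theta_{H_\ell,\varphi}(x)$, then Prop.~\ref{lem:smooth} with $\Theta_{H_\ell}\leq 1-\delta_0$ from Lem.~\ref{lem:descent}, and Lem.~\ref{lem:disc} cancels the factor $\ell(\ell-1)$, with the leftover $\log\ell/\log(x)$ absorbed via $\ell\leq\exp(\log(x)^{3/10})$. The only differences are cosmetic: you take $\varphi\equiv1$ on $[1,2]$ where the paper takes $\psi\geq1$, and you bound $1+\log\ell$ by $\log\ell$ using $\ell\geq3$.
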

\begin{proof}
Choose a function $\psi\in C_c^\infty((1/2,3),\R_{\geq 0})$, which is $\psi\geq1$ on $[1,2]$. Recall that $[H_\ell:K]=\ell(\ell-1)$ by Lem.\ \ref{lem:degree} . Hence, for every prime ideal counted by $S_\ell(x)$ there are exactly $\ell(\ell-1)$ primes of $H_\ell$ above it, all with the same norm. Their terms in \eqref{eq:thetadef}, attached to $j=1$, contribute at least a summand $\ell(\ell-1)\log(x)$. Positivity therefore gives
$$\ell(\ell-1)\log(x)S_\ell(x)\leq\Theta_{H_\ell,\psi}(x).$$
By Lem.\ \ref{lem:descent} and Prop.\ \ref{lem:smooth}, the right side is a $O_\psi( x+x^{1-\delta_0} \bigl(\log D_{H_\ell}+[H_\ell:\Q]\bigr)).$ Since $[H_\ell:\Q]=n\ell(\ell-1)$, Lem.\ \ref{lem:disc} bounds the second term after division by $\ell(\ell-1)\log(x)$ by a $O_K(x^{1-\delta_0}\frac{1+\log\ell}{\log(x)})$. For $\ell\leq \exp(\log(x)^{3/10})$ this last ratio is bounded independently of $\ell$ and $x$, assuming that $x$ is sufficiently large. This proves the claim.
\end{proof}

\section{Prime ideals of non-surjective reduction}\label{sec:goodprimes}

\subsection{Basic facts on the reduction map $\O_K^*\longrightarrow(\O_K/\p)^*$}
We now want to apply Scholium (II) with $k=m$. To this end, and to simplify matters, set for $x>1$,
$$\mathcal P_m(x):=\left\{p\text{ prime}\left|
\begin{array}{l}
x<p<2x,\quad p\equiv-1\pmod{8m},\\
\exists \ r\in\mathbb Z_{\ge1},\ \exists \ q\text{ prime, s.t.}:\\
p-1=2rq,\quad q>x^{9/10},\\
\text{every prime divisor }\ell\mid r
\text{ satisfies }\exp(\log(x)^{1/10})\le\ell\le \exp(\log(x)^{3/10})
\end{array}\right.
\right\}.$$
Then, by Scholium (II), there are constants $a_m,x_m>0$ such that for all $x\geq x_m$
\begin{equation}\label{eq:reservoirsize}
 |\mathcal P_m(x)|\geq a_m\frac{x}{\log(x)^2}.
\end{equation}
For the remainder of this section, we suppose that $x\geq \max\{x_m,3\}$. For each $p\in\mathcal P_m(x)$, fix integers $r=r(p,x)$ and $q=q(p,x)$ as in the definition above.
%Recall that $M=8m$ and that every $p\in\mathcal P(x)$ satisfies $p\equiv-1\pmod M$.

\begin{lem}\label{lem:progression}
{\rm (i):} Every $p\in\mathcal P_m(x)$ splits completely in $K$ as $p\O_K=\p_1\cdots\p_n$, where $\p_1,\ldots,\p_n$ are distinct prime ideals satisfying $\O_K/\p_j\cong\mathbb F_p$, $j=1,\ldots,n$. Moreover, $v_2(p-1)=1$, i.e., $2$ divides the even number $p-1$ exactly once.\\ {\rm (ii):} For any prime ideal $\p$ above $p$, let $H_\p$ denote
the image of the natural reduction map $\O_K^*\longrightarrow(\O_K/\p)^*$. Then, the index
$$i(\p):=[(\O_K/\p)^*:H_\p]=\frac{p-1}{|H_\p|}$$
is odd.
\end{lem}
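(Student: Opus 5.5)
For (i), we use $p\equiv-1\pmod{8m}$. Since $K\subseteq\Q(\zeta_m)$, the Frobenius at $p$ in $\Gal(\Q(\zeta_m)/\Q)\cong(\Z/m\Z)^*$ is the class of $p\equiv-1\pmod m$, i.e.\ complex conjugation (for $m\geq3$; for $m\leq 2$ it is trivial anyway). First, $p\nmid m$, because $p>x\geq 3$ and $p\equiv -1 \pmod{m}$ forces $\gcd(p,m)=1$. Hence $p$ is unramified in $\Q(\zeta_m)$, and thus also in $K$. Since $K$ is totally real, complex conjugation restricts to the identity on $K$. Therefore the Frobenius of $p$ in $\Gal(K/\Q)$ is trivial and $p$ splits completely: $p\O_K=\p_1\cdots\p_n$ with distinct $\p_j$ of residue degree one, i.e.\ $\O_K/\p_j\cong\F_p$. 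For the $2$-adic statement, $p\equiv-1\pmod 8$ gives $p-1\equiv 6\pmod 8$, so $v_2(p-1)=1$. (Alternatively $p-1=2rq$ with $q$ and $r$ odd, since all prime divisors of $r$ are large; either argument works.)

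For (ii), $(\O_K/\p)^*\cong\F_p^*$ is cyclic of order $p-1$, and $v_2(p-1)=1$. So the index $i(\p)$ is odd if and only if $H_\p$ has even order, i.e.\ if and only if $H_\p$ contains the unique element of order $2$ in $\F_p^*$, namely $-1$. But $-1\in\O_K^*$ maps to $-1\bmod\p$, and $-1\not\equiv 1\pmod{\p}$ because $p$ is odd. Hence $2\mid|H_\p|$, and so $i(\p)=(p-1)/|H_\p|$ is odd.

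The only point requiring care is the Frobenius identification in (i), namely that $p\equiv -1\pmod m$ acts on $\Q(\zeta_m)$ as complex conjugation and hence trivially on the real subfield $K$. This is standard, so no real obstacle is expected.
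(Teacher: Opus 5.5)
Your proof is correct and follows essentially the same route as the paper: identify the Frobenius of $p\equiv-1\pmod m$ in $\Q(\zeta_m)/\Q$ with complex conjugation, which is trivial on the totally real field $K$; read off $v_2(p-1)=1$ from $p\equiv 7\pmod 8$; and use $-1\in\O_K^*$ to force $|H_\p|$ to be even, hence $i(\p)$ odd. Your parenthetical alternative for the $2$-adic claim, via $q$ and $r$ being odd, is also valid.
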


\begin{proof}
Let $p\in\mathcal P_m(x)$. Since $p\equiv-1\pmod m$, we have $p\nmid m$, and the arithmetic Frobenius of $p$ in $\Q(\zeta_m)/\Q$ sends $\zeta_m$ to $\zeta_m^{-1}$. Thus, it is complex conjugation. Since $K$ is totally real, its restriction to $K$ is therefore the identity. Consequently, $p$ is unramified and splits completely in $K$, giving the asserted $n$ distinct prime ideals of norm $p$. Since $p\equiv-1\pmod{8m}$ as by assumption $p\in\mathcal P_m(x)$, we get in particular $8\mid(p+1)$, whence $p\equiv7\pmod8$. Writing $p=8c+7$ with $c\in\Z_{\geq0}$, we see that $p-1=8c+6=2(4c+3)$ and as $4c+3$ is odd, $v_2(p-1)=1$.\\\\
Now fix any prime ideal $\p$ above $p$. The group $(\O_K/\p)^*\cong\F_p^*$ has order $p-1$. Since $-1\in\O_K^*$ and $p=8c+7$ is odd, $H_\p$ contains an element of order $2$ and is hence of even order itself, i.e., $|H_\p|$ is even. As $|H_\p|$ divides $p-1$ and $v_2(p-1)=1$, we obtain $v_2(|H_\p|)=1$ and $v_2(i(\p))=v_2(p-1)-v_2(|H_\p|)=0$. Therefore, $i(\p)$ is odd.
\end{proof}

\noindent Thus, each $p\in\mathcal P_m(x)$ provides $n$ distinct prime ideals of $\O_K$ of norm $p$. If $\p$ is any such prime ideal above $p$, and if a prime $\ell$ divides $i(\p)$, it is therefore odd and divides $p-1$, both by Lem.\ \ref{lem:progression}, and so the subgroup $H_\p$ is contained in the subgroup of $\ell$-th powers in the cyclic group $(\O_K/\p)^*\cong\mathbb F_p^*$. In particular,
$$ \varepsilon^{(p-1)/\ell}\equiv1\pmod\p.$$
If $i(\p)>1$, i.e., if the natural reduction map $\O_K^*\longrightarrow(\O_K/\p)^*$ is not surjective, Lem.\ \ref{lem:progression} thus gives such an odd prime divisor $\ell$ of $i(\p)$. Since $p-1=2rq$, it either divides $r$ or equals $q$. 

\subsection{Exceptional divisors of $i(\p)$: $\ell\mid r$}\label{sect:1} 

Let $A(x)$ denote the number of distinct prime ideals $\p$ above primes $p\in\mathcal P_m(x)$ for which some prime divisor $\ell$ of $r$ also divides $i(\p)$. For sufficiently large $x$, every such $\ell$ lies in the interval prescribed by Scholium (II), and every prime $\ell\in[\exp(\log(x)^{1/10}),\exp(\log(x)^{3/10})]$ is odd and does not divide $D$. Also, $\exp(\log(x)^{3/10})<x^{9/10}$, so $\ell\neq q$. If such an $\ell$ also divides $i(\p)$, then $\ell\mid r$ and Lem.\ \ref{lem:splittingtest} shows that $\p$ is counted by $S_\ell(x)$. By Cor.\ \ref{prop:splitting}, the number $A(x)$ is hence bounded above by 
\begin{align}
 A(x)\leq \sum_{\exp(\log(x)^{1/10})\leq\ell\leq \exp(\log(x)^{3/10})}S_\ell(x)
 &\ll_K\frac{x}{\log(x)}
 \sum_{n\geq\lceil \exp(\log(x)^{1/10})\rceil}\frac1{n(n-1)}
 +\exp(\log(x)^{3/10})x^{1-\delta_0}\notag\\
 &\ll_K\frac{x}{\exp(\log(x)^{1/10})\log(x)}+\exp(\log(x)^{3/10})x^{1-\delta_0}.
 \label{eq:smallexceptions}
\end{align}
Here, obviously the first sum on the left is over primes $\ell$.\\\\
We observe that both terms in the last line of \eqref{eq:smallexceptions} are in fact an $o(x/\log(x)^2)$ as $x\to\infty$. Indeed, dividing by $x/\log(x)^2$ and putting $t=\log(x)$, $u=\log(x)^{1/10}$, then their respective ratios satisfy
$$u^{10}e^{-u}\longrightarrow0,$$ and
$$t^2e^{t^{3/10}-\delta_0t}\leq t^2e^{-\delta_0t/2}\longrightarrow 0$$
as $x\to\infty$. The latter inequality holds for all sufficiently large $t$, since $t^{3/10}/t=t^{-7/10}\to0$ and our $\delta_0>0$ -- whose existence is guaranteed by Scholium (I) --  is fixed and strictly positive. So, in summary,
\begin{equation}\label{eq:smallorderA}
A(x)=o\left(\frac{x}{\log(x)^2}\right)
\end{equation}

\subsection{Exceptional divisors of $i(\p)$: $\ell = q$}\label{sect:2}

Suppose that $q\mid i(\p)$. The image $H_\p$ of the unit group of $\O_K$ in $(\O_K/\p)^*$ has order $(p-1)/i(\p)$. Consequently, the order of $\varepsilon\bmod\p$ divides $(p-1)/i(\p)$, which in turn divides $(p-1)/q$. Since $p<2x$ and $q>x^{9/10}$, we obtain
\begin{equation}\label{eq:smallorder}
\operatorname{ord}_{\p}(\varepsilon)
\leq\frac{p-1}{q}<2x^{1/10}.
\end{equation}
Abreviate $J:=\lfloor2x^{1/10}\rfloor$ and
$$\alpha_x:=\prod_{j=1}^{J}(\varepsilon^j-1)\in\O_K\setminus\{0\}.$$
The product is nonzero because $\varepsilon>1$ (and $x\geq 3$). Since $\varepsilon^{{\rm ord}_{\p}(\varepsilon)}-1\in\p$, \eqref{eq:smallorder} implies that $\alpha_x\in\p$,
or equivalently $\p\mid(\alpha_x)$. Let $\sigma$ run over the $n$ real embeddings $K\hra\R$, and set
$$C_\varepsilon:=\sum_{\sigma:K\hra\R}\log(\max\{1,|\sigma(\varepsilon)|\}).$$
It follows that
$$\bigl|N_{K/\Q}(\varepsilon^j-1)\bigr|=\prod_\sigma|\sigma(\varepsilon)^j-1|\leq2^n\prod_\sigma\max\{1,|\sigma(\varepsilon)|\}^j=2^n\exp(jC_\varepsilon).$$
We get
$$N((\alpha_x))=\bigl|N_{K/\Q}(\alpha_x)\bigr|\leq2^{nJ}\exp\left(C_\varepsilon\frac{J(J+1)}2\right).$$
Taking logarithms gives
\begin{equation}\label{eq:normproduct}
\log N((\alpha_x))\leq nJ\log2+C_\varepsilon\frac{J(J+1)}2
\ll_K J^2\ll_K x^{1/5}.
\end{equation}
Write the prime ideal factorization as $(\alpha_x)=\prod_{\mathfrak q}\mathfrak q^{e_{\mathfrak q}}$. Then,
$$\log N((\alpha_x))=\sum_{\mathfrak q}e_{\mathfrak q}\log N(\mathfrak q).$$
Each prime ideal $\p$ with $q\mid i(\p)$ occurs with $e_{\p}\geq1$ and has $N(\p)>x$. Thus, if $B(x)$ denotes the number of distinct such prime ideals $\p$ above primes $p\in\mathcal P_m(x)$, then $\log N((\alpha_x))=\sum_{\mathfrak q}e_{\mathfrak q}\log N(\mathfrak q)\geq B(x)\log(x)$. Together with \eqref{eq:normproduct}, this yields
$$B(x)\ll_K\frac{x^{1/5}}{\log(x)}.$$
So, dividing again by $x/\log(x)^2$ and observing that $\frac{x^{1/5}/\log(x)}{x/\log(x)^2}=\frac{\log(x)}{x^{4/5}}\longrightarrow 0$, as $x\to\infty$, we obtain again 
\begin{equation}\label{eq:largeexceptions}
B(x)=o\left(\frac{x}{\log(x)^2}\right).
\end{equation}

\section{Proof of the Main Theorem}\label{sect:mthmproof}
\subsection{} 
We are finally ready to give the proof of our main result. So, $K/\Q$ is totally real and abelian, with $n=[K:\Q]\geq2$, and $m$ is the fixed cyclotomic modulus chosen above.

\begin{proof}[Proof of the Main Theorem:]
Let $x_m,a_m>0$ be the constants provided by Scholium (II) with $k=m$ and set $c_K:=(n-1)a_m$. By Lem.\ \ref{lem:progression} the rational primes in $\mathcal P_m(x)$ provide exactly $n|\mathcal P_m(x)|$ distinct prime ideals $\p$ of $\O_K$, whose norm is a rational prime. Those, for which $i(\p)>1$ belong to at least one of the two exceptional sets just bounded in cardinality in Sections \ref{sect:1} and \ref{sect:2}, i.e., more precisely, the number of these prime ideals $\p$ is bounded above by $A(x)+B(x)$. Therefore, recalling \eqref{eq:reservoirsize}, \eqref{eq:smallorderA} and \eqref{eq:largeexceptions} and observing that surjectivity of the reduction map $\O_K^*\ra (\O_K/\p)^*$ is equivalent to $i(\p)=1$, we obtain
$$
\begin{aligned}
\Pi_K(x)
&\geq n|\mathcal P_m(x)|-A(x)-B(x)\\
&\geq na_m\frac{x}{\log(x)^2}
      -o\!\left(\frac{x}{\log(x)^2}\right).
\end{aligned}
$$
for $x\gg 1$ and we may hence choose an $x_K\geq\max\{x_m,3\}$ large enough such that all preceding estimates apply and that $A(x)+B(x)\leq a_m\frac{x}{\log(x)^2}$ for $x\geq x_K$. This establishes the claimed lower bound of our Main Theorem.\\\\ 
The asserted claim about $\O_K$ being Euclidean, if $h_K=1$, follows from this, as it is a direct application of \cite{HM}, Prop.\ 2. Indeed, for sufficiently large $y$, the number of prime ideals $\p$ with $N(\p)\leq y$ and for which $\O_K^*\ra (\O_K/\p)^*$ is surjective, is at least $\Pi_K(y/2)\gg_K y/(\log y)^2$. When $h_K=1$, these ideals correspond to prime elements up to association, so finally Prop.\ 2 of \cite{HM}, with the empty admissible set, applies. The converse assertion and the remaining case of $[K:\Q]=1$ are both trivially covered.
\end{proof}


\begin{thebibliography}{99}

\bibitem[Gro26]{grobner} H. Grobner, {\it On divergence related to Riemann--von Mangoldt's explicit formula of the prime-counting function}, preprint, arXiv:2609.02713v2 (2026)

\bibitem[Har-Mur04]{HM}M. Harper, M. R. Murty, Euclidean rings of algebraic integers, {\it Canadian J. Math.} \textbf{56} (2004) 71--76

\bibitem[Has-She-Won22]{HSW} E. Hasanalizade, Q. Shen, P.-J. Wong, Counting zeros of Dedekind zeta functions, {\it Math. Computation} \textbf{91} (2022) 277--293

\bibitem[HKMS24]{HKMS} D. Hu, I. Kaneko, S. Martin, C. Schildkraut, On a Mertens-type conjecture for number fields, {\it Math. Proc. Cambridge Philos. Soc.} \textbf{177} (2024), 481--523

\bibitem[Iwa-Kow04]{IK04} H. Iwaniec, E. Kowalski, \emph{Analytic Number Theory}, American Mathematical Society Colloquium Publications {\bf 53} (AMS, Providence, RI, 2004)

\bibitem[Mur-Pet13]{MP} M. R. Murty, K. L. Petersen, The Euclidean algorithm for number fields and primitive roots, {\it Proc. Amer. Math. Soc.} {\bf 141} (2013) 181--190

\bibitem[Nar07]{nark} W. Narkiewicz, Euclidean algorithm in small Abelian fields, {\it Funct. Approx. Comment. Math.} {\bf 37} (2007) 337--340

\bibitem[Neu99]{Neukirch} J. Neukirch, \emph{Algebraic Number Theory}, translated by N.~Schappacher, Grundlehren math. Wissenschaften {\bf 322} (Springer, Berlin, 1999).

\bibitem[OAI26]{OAI} OpenAI, \emph{Primitive roots for every admissible integer base}, preprint, Oct 4, (2026) \href{https://github.com/openai/math/blob/main/preprints/Primitive-roots-for-every-admissible-integer-base-October-4-2026/primitive-roots-all-integer-bases.pdf}{OpenAI mathematics repository.}

\bibitem[Wei73]{wein} P. J. Weinberger, \emph{On Euclidean rings of algebraic integers}, in: \emph{Analytic Number Theory} Proc. Symp. Pure Math. {\bf 24} (AMS, Providence, RI, 1973) 321--332


\end{thebibliography}
\end{document}